\newcommand{\corr}[1]{\langle {#1} \rangle}
 \newcommand{\bR}{\mathbb{R}}  
\newcommand{\bt}{{\bf t}}
 \newcommand{\pd}{\partial}
\newcommand{\Mbar}{\overline{\mathcal M}}
\newcommand{\be}{\begin{equation}}
\newcommand{\ee}{\end{equation}}
\newcommand{\bea}{\begin{eqnarray}}
\newcommand{\eea}{\end{eqnarray}}
\newcommand{\ben}{\begin{eqnarray*}}
\newcommand{\een}{\end{eqnarray*}}
\newcommand{\half}{\frac{1}{2}}
\newtheorem{cor}{Corollary}[section]
\newtheorem{lem}[cor]{Lemma}
 \newtheorem{prop}[cor]{Proposition}
 \newtheorem{thm}[cor]{Theorem}
\theoremstyle{remark}
\definecolor{A}{rgb}{.75,1,.75}
\definecolor{green}{rgb}{0,1,0}
\definecolor{yellow}{rgb}{1,1,0}
\definecolor{orange}{rgb}{1,.7,0}
\definecolor{red}{rgb}{1,0,0}
\definecolor{white}{rgb}{1,1,1}
\begin{document}
\title
{On a Mean Field Theory of Topological 2D Gravity}

\author{Jian Zhou}
\address{Department of Mathematical Sciences\\Tsinghua University\\Beijng, 100084, China}
\email{jzhou@math.tsinghua.edu.cn}

\begin{abstract}
We present a one-dimensional mean field theory for topological 2D gravity.
We discuss possible generalizations to other topological field theories,
in particular those related to 
semisimple Frobenius manifolds. 
\end{abstract}

\maketitle

\section{Introduction}

The mathematical theory of the topological 2D gravity studies the following
intersection numbers on the Deligne-Mumford moduli spaces:
\be
\corr{\tau_{a_1} \cdots \tau_{a_n}}_g := \int_{\Mbar_{g,n}} \psi_1^{a_1} \cdots \psi_n^{a_n}.
\ee
As is well-known,
instead of considering these numbers individually,
it is more effective to study them collectively by  considering their formal generating series:
\be  \label{def:F}
F(\bt; \lambda) = \sum_{g \geq 0} \lambda^{2g-2} F_g(\bt),
\ee
where
\be
F_g(\bt) : = \sum  \corr{\tau_{a_1} \cdots \tau_{a_n}}_g \frac{t_{a_1} \cdots t_{a_n}}{n!}.
\ee

Apparently $F$ involves  integrations on infinitely many spaces and infinitely many parameters.
In this paper,
we will present a mean field theory for this theory which is one-dimensional,
more precisely,
a model that depends on formal integration of a formal field on a one-dimensional
space, depending on infinitely parameters.  
This is very striking because the original theory involves 
all sorts of topology of Riemann surfaces and their suitable compactifications, 
it is very hard to expect that all the information can be essentially 
encoded in a one-dimensional theory.
The secret is that we hide all the complexities in the interactions of the mean
field theory. 

The idea of a mean field theory is a very well-known old idea in statistical physics where one considers 
a system with a very large degree of freedom. 
One approximates the system by finding a field theory for suitably chosen 
finitely many order parameters 
so that the Euler-Lagrange equations approximate the equations of states of the systems.
Usually quantum corrections are needed to improve the approximation. 
Renormalization flow was developed to provide a scheme to canonically improve the model. 

The application of the approach of mean field theory to topological string theory
was initiated in \cite{Dijkgraaf-Witten}.
The authors focus on  the genus zero case based on 
topological recursion relations in genus zero (see also \cite{Itzykson-Zuber} for mean field theory in genus zero
for topological 2D gravity).
They also discussed the cases of genus one and higher genera based on integrable hierarchies.
Their discussions are mostly concerned with the derivation of the equations of states.
In this paper we will supplement their work by providing the suitable action
functional that leads to these equations.
We will focus on the case of pure topological gravity. 
We will speculate on the generalizations to the general case in the final Section 5
and leave the details to be worked out in subsequent work.

For the order parameter in the case of topological 2D gravity,
we take the specific heat defined as the second derivative of the free energy: 
\be
u(\bt; \lambda) = \sum_{g \geq 0} \lambda^{2g} u_g(\bt) 
= \sum_{g \geq 0} \lambda^{2g} \frac{\pd^2F_g(\bt)}{\pd t_0^2}. 
\ee 
It is customary to take $t_0$ as the space variable and denote it by $x$,
$u$ then can be thought of as field on the space $\bR^1$ with coordinate $x$,
parameterized by time variables $t_1, t_2, \dots$.
By Witten Conjecture/Kontsevich Theorem \cite{Witten, Kontsevich},
$u$ satisfies the KdV hierarchy
\be
\frac{\pd u}{\pd t_n} = \pd_{x} R_n[u],
\ee
for the sequence of Gelfand-Dickey differential polynomials $R_n[u]$. 
It is known that there is a differential polynomial $T_n[u]$ such that
\be
\pd_x T_n[u] = \pd_x u \cdot R_n[u].
\ee
The action we find for our mean field theory is
\be
S[u] =  \frac{1}{\lambda^2}
\sum_{n \geq 0} (t_n - \delta_{n1}) \cdot \int  T_{n}[u(x)] dx +  F(\bt)|_{t_0=0}.
\ee
We will show that its Euler-Lagrange equation is equivalent to the string equation:
\be  
u = \sum_{k=1}^\infty t_n  R_n[u] + x.
\ee
Furthermore, if $u(\bt)$ is determined by this equation,
then we have
\be
F(\bt) =  \frac{1}{\lambda^2}
\sum_{n \geq 0} (t_n - \delta_{n0}) \cdot \int  T_{n}[u(\bt)] dx +  F(\bt)|_{t_0=0}.
\ee
This means our mean field theory is exact semiclassically without any quantum corrections.

Note in the above we have not specified the bounds for integration.
This is because we are working in a formal setting without considering the issue of convergence. 
In fact, we will show that we cannot expect the convergence in this case.
Since $F_g(\bt)$ involves infinitely many variables,
to make sense of the convergence problem
one can restrict to the subspaces where
only fixed  numbers of finitely many variables are possibly nonvanishing.
In fact we will consider the restriction to the space where all variables expect for $t_0$
and $t_2$ vanish.
Denote by $F_g(t_0, t_2)$ the restriction of $F_g(\bt)$ on this space.
We will establish the following result:
\be
\sum_{g \geq 0} u_g(t_0, t_2) \lambda^{2g}
= \frac{1}{t_2} + \sum_{g \geq 0} \frac{2 a_g}{24^g} t_2^{3g-1} (1-2t_0t_2)^{-(5g-1)/2} \lambda^{2g},
\ee
where $\{a_g\}_{g\geq 0}$ is a sequence of integers studied in probability theory of graphs
 \cite{Janson, Janson-Chassaing},
see also \cite{Finch}.
By the asymptotic formula for $a_n$,
we have
\be
\lim_{n \to \infty} \frac{1}{a_n^{1/n}} = 0.
\ee
It follows that when $u_2 \neq 0$,
the radius of convergence of the series
\be
\sum_{g \geq 0} F_g(t_0, t_2) \lambda^{2g}
\ee
as a power series in $\lambda$ is zero.

The rest of the paper is arranged as follows.
In Section 2 we recall the Gelfand-Dickey polynomials $R_n[u]$ 
and some related differential polynomials $T_n[u]$ 
and establish some variational properties of $T_n$.
The mean field theory of the topological 2D gravity is derived 
in Section 3 based on the string equation.
The convergence problem in Section 4 will be addressed in Section 3. 
In the final Section 5 we present some speculations about possible generalizations.

\section{Gelfand-Dickey Polynomials and Their Properties}

\subsection{Gelfand-Dickey polynomials}

Following \cite{Gelfand-Dikii},
define a sequence $\{ R_n\}$ of differential polynomials in $u$
by the Lenard recursion relations:
\be \label{eqn:Lenard}
\begin{split}
& R_0 = 1, \\
& \pd_x R_{n+1} = \frac{1}{2n+1} \biggl(\pd_xu \cdot R_n
+ 2u \cdot \pd_x R_n + \frac{\lambda^2}{ 4} \pd_x^3R_n \biggr).
\end{split}
\ee
For example,
\ben
\pd_x R_1 & = & \pd_x u, \\
R_1 & = & u, \\
\pd_x R_2 & = & u \cdot \pd_x u + \frac{\lambda^2}{12} \pd_x^3 u, \\
R_2 & = & \frac{1}{2} u^2 + \frac{\lambda^2}{12} \pd_x^2u,  \\
\pd_x R_3 & = & \frac{1}{2} u^2\cdot \pd_x u + \frac{\lambda^2}{12} u \cdot \pd_x^3u
+ \frac{\lambda^2}{6}\pd_x u \cdot \pd_x^2u + \frac{\lambda^4}{240}\pd_x^5u, \\
R_3 & = & \frac{1}{6} u^3 + \frac{\lambda^2}{12} u \cdot \pd_x^2 u
+ \frac{\lambda^2}{24} (\pd_x u)^2 + \frac{\lambda^4}{240} \pd_x^4 u.
\een

Rewrite \eqref{eqn:Lenard} as follows:
\be \label{eqn:Lenard-2}
 R_{n+1} = \frac{1}{2n+1} \biggl(u \cdot  + \frac{\lambda^2}{4} \pd_x^2 \biggr) R_n
+ \frac{1}{2n+1} \pd_x^{-1}(u \cdot \pd_x R_n),
\ee
or alternative as
\be \label{eqn:Lenard-3}
 R_{n+1} = \frac{1}{2n+1} \biggl(2u \cdot  + \frac{\lambda^2}{4} \pd_x^2 \biggr) R_n
- \frac{1}{2n+1} \pd_x^{-1}(\pd_x u \cdot  R_n),
\ee
To find $R_{n+1}$, one needs to show that $u \cdot \pd_x R_n$ or $\pd_x u \cdot R_n$ is a total derivative
and finds the corresponding antideritive.
More generally, in \cite{Gelfand-Dikii} it was proved that for $k, l \geq 0$,
there exists a differential polynomial $P_{k,l}$ such that
\be
R_k \cdot \pd_x R_l = \pd_x P_{k,l}.
\ee
In particular,
there are differential polynomials $T_n$ such that
\be \label{Def:T-n}
\pd_x u \cdot R_n = \pd_x T_n.
\ee
The following are the first few terms:
\ben
&& T_0 = u, \\
&& T_1 = \frac{1}{2} u^2, \\
&& T_2 = \frac{1}{6} u^3+ \frac{1}{24} u_x^2\lambda^2, \\
&& T_3 = \frac{1}{24} u^4 + \frac{1}{24} u u_x^2 \lambda^2
- \frac{1}{480} u_{2x}^2\lambda^4+ \frac{1}{240} u_x u_{3x}\lambda^4, \\
&& T_4 = \frac{1}{120} u^5+ \frac{1}{48} u^2 u_x^2\lambda^2
+ \frac{1}{240} u_x^2u_{2x} \lambda^4  - \frac{1}{480} u u_{2x}^2 \lambda^4 \\
&& + \frac{1}{240} u u_x u_{3x} \lambda^4
+ \frac{1}{13440} u_{3x}^2\lambda^6
- \frac{1}{6720} u_{2x} u_{4x} \lambda^6
+ \frac{1}{6720} u_x u_{5x} \lambda^6.
\een

\subsection{Variational derivative}

Denote by $A = \oplus_{n \geq 0} A_n$  the space of differential polynomials,
i.e.,
polynomials in $u_0=u(x), u_1 = \pd_x u(x)$, $\dots$, $u_n = \pd_x^n u(x)$, $\dots$.
By $A_n$ we denote the space of homogeneous differential polynomials of degree $n$.
On the space $A$ the operators $\frac{\pd}{\pd u_k}$ naturally act,
so do the operator
\be
\pd_x = \sum_{k \geq 0} u_{k+1} \frac{\pd}{\pd u_k}
\ee
and the operator $\delta$ defined as follows:
\be
\delta = \sum_{k \geq 0} (-1)^k \pd_x^k \frac{\pd}{\pd u_k}.
\ee
For $f \in A$, $\delta f$ will be called the variational derivative of $f$.
Since
\be
[\frac{\pd}{\pd u_k}, \pd_x] = \begin{cases}
0, & \text{if $k=0$}, \\
\frac{\pd}{\pd u_{k-1}}, & \text{if $k \geq 1$},
\end{cases}
\ee
one can easily see that
\be
\delta \pd_x = 0.
\ee
In \cite{Gelfand-Dikii} the following sequence
\be
0 \to \bR \to A \stackrel{\pd_x}{\to} A \stackrel{\delta}{\to} A
\ee
is proved to be exact.
Furthermore,
the following identity is established by symbolic computations:
\be \label{eqn:Delta-U1}
\delta (u_1 \cdot  \delta f )= 0,
\ee
for $f \in A$.
By slightly modifying the proof,
one can also prove the following identity:
\be
\delta (u \cdot  \delta f )=  n\cdot  \delta f,
\ee
for $f \in A_n$.

\subsection{Variational property of $T_n$}

In \cite{Gelfand-Dikii},
the following relation is proved:
\be
\delta R_{n+1} = R_n.
\ee
By \eqref{eqn:Delta-U1} one then has
\be
\delta (\pd_x u \cdot R_{n}) = 0.
\ee

\begin{lem}
For $n \geq 0$,
\be
\delta(u R_n) = (n+1) R_n - \frac{1}{2} \lambda \frac{\pd}{\pd \lambda} R_n.
\ee
\end{lem}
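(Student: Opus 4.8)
The plan is to reduce everything to the two facts already in hand — the Gelfand--Dikii relation $\delta R_{n+1} = R_n$ and the modified identity $\delta(u\cdot\delta f) = m\,\delta f$ for $f \in A_m$ — together with a homogeneity (Euler) identity for $R_n$ that produces the term $\frac12\lambda\frac{\partial}{\partial\lambda}R_n$.

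First I would record the scaling weight of $R_n$. Assigning degree $1$ to each $u_k$ and degree $\frac12$ to $\lambda$ (so that $\lambda^2$ has degree $1$ and $\partial_x$ has degree $0$), I would verify by induction on the Lenard recursion \eqref{eqn:Lenard} that $R_n$ is homogeneous of total degree $n$: the base cases $R_0 = 1$ and $R_1 = u$ are clear, and each of the three terms $\partial_x u\cdot R_n$, $2u\,\partial_x R_n$, $\frac{\lambda^2}{4}\partial_x^3 R_n$ on the right-hand side has degree $n+1$ once $R_n$ has degree $n$, forcing $\deg R_{n+1} = n+1$. Euler's relation for this grading then reads
\[
\sum_{k\geq 0} u_k \frac{\partial R_n}{\partial u_k} + \frac12 \lambda\frac{\partial R_n}{\partial\lambda} = n R_n,
\]
and this is precisely where the $\lambda$-derivative term will enter.

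Next I would decompose by ordinary polynomial degree. Write $R_m = \sum_i R_m^{(i)}$, where $R_m^{(i)} \in A_i$ collects the monomials of polynomial degree $i$ in the $u_k$'s (these carry the factor $\lambda^{2(m-i)}$). Since $\delta$ lowers polynomial degree by exactly one and leaves $\lambda$ untouched, the relation $\delta R_{n+1} = R_n$ splits into $R_n^{(i)} = \delta R_{n+1}^{(i+1)}$ with $R_{n+1}^{(i+1)} \in A_{i+1}$. I then compute degree by degree,
\[
\delta(u R_n) = \sum_i \delta\bigl(u\cdot \delta R_{n+1}^{(i+1)}\bigr) = \sum_i (i+1)\,\delta R_{n+1}^{(i+1)} = \sum_i (i+1) R_n^{(i)},
\]
where the middle equality applies the identity $\delta(u\cdot\delta f) = m\,\delta f$ to $f = R_{n+1}^{(i+1)} \in A_{i+1}$.

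Finally I would reassemble. Writing $\sum_i(i+1)R_n^{(i)} = R_n + \sum_i i\,R_n^{(i)}$ and recognizing $\sum_i i\,R_n^{(i)} = \sum_k u_k\,\partial R_n/\partial u_k$ as the polynomial-degree operator, the Euler relation converts this into $R_n + \bigl(nR_n - \frac12\lambda\partial R_n/\partial\lambda\bigr) = (n+1)R_n - \frac12\lambda\partial R_n/\partial\lambda$, which is the claim. The genuinely load-bearing step is the mixed homogeneity of $R_n$ with $\deg\lambda = \frac12$; everything else is bookkeeping that arranges the polynomial-degree decomposition so the homogeneous identity $\delta(u\cdot\delta f) = m\,\delta f$ applies term by term. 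I expect the main care to be needed in confirming that $\delta$ shifts polynomial degree by exactly one — so that $R_n^{(i)} = \delta R_{n+1}^{(i+1)}$ is clean — and that the weight $\frac12$ for $\lambda$ is the unique choice compatible with the Lenard recursion.
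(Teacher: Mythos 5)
Your proposal is correct and takes essentially the same route as the paper: both decompose $R_{n+1}$ into components that are simultaneously homogeneous in $\lambda$-power and polynomial degree (your weighted Euler identity with $\deg \lambda = \tfrac12$ is exactly a repackaging of the paper's observation that $R_n = \sum_{g} \lambda^{2g} R_n^{(g)}$ with $R_n^{(g)} \in A_{n-g}$), and both then apply $\delta R_{n+1} = R_n$ and the identity $\delta(u \cdot \delta f) = m\,\delta f$ for $f \in A_m$ componentwise before reassembling the sum into $(n+1)R_n - \tfrac12 \lambda \frac{\partial}{\partial \lambda} R_n$. The only cosmetic difference is that the paper indexes the decomposition by the power of $\lambda$ while you index by polynomial degree in the $u_k$'s.
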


\begin{proof}
Write  $R_n = \sum_{g\geq 0} \lambda^{2g} R_n^{(g)}$, where $R_n^{(g)} \in A$.
By \eqref{eqn:Lenard} it is not hard to see that
\be
R_n^{(g)} \in A_{n - g}.
\ee
So we have
\ben
\delta (uR_n) 
& = & \delta (u \cdot R_n)    = \delta (u \cdot \delta R_{n+1})   \\
& = & \sum_{g\geq 0} \lambda^{2g} \delta (u \cdot \delta R^{(g)}_{n+1})  \\
& = & \sum_{g\geq 0} \lambda^{2g} (n+1-g) \delta R^{(g)}_{n+1}  \\
& = & (n+1 - \half \lambda \frac{\pd}{\pd \lambda}) R_n.
\een
\end{proof}

The following result plays a key role in the next Section:

\begin{prop}
For $n \geq 0$,
the following holds:
\be
 \delta  T_n = (1 -\lambda \frac{\pd}{\pd \lambda}) R_n.
\ee
\end{prop}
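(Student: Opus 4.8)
The plan is to first reduce $T_n$ to a closed form built out of $R_{n+1}$, $R_n$, and $\pd_x^2 R_n$, and then to apply the operator $\delta$ using the three ingredients already at hand: $\delta R_{n+1} = R_n$, the relation $\delta \pd_x = 0$, and the Lemma just proved. The point is that once $T_n$ is expressed through the $R_k$'s, the variational derivative becomes a one-line substitution.

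First I would rewrite the defining relation $\pd_x T_n = \pd_x u \cdot R_n$ by means of the Lenard recursion \eqref{eqn:Lenard}. Solving that recursion for $\pd_x u \cdot R_n$ gives
\be
\pd_x u \cdot R_n = (2n+1)\pd_x R_{n+1} - 2u\,\pd_x R_n - \frac{\lambda^2}{4}\pd_x^3 R_n.
\ee
The key manipulation is the integration by parts $2u\,\pd_x R_n = \pd_x(2uR_n) - 2\,\pd_x u \cdot R_n$, which lets me collect the two $\pd_x u \cdot R_n$ terms on one side. After this rearrangement every summand on the right is a total $x$-derivative, and I obtain
\be
\pd_x T_n = \pd_x\left(-(2n+1)R_{n+1} + 2uR_n + \frac{\lambda^2}{4}\pd_x^2 R_n\right).
\ee
Since the kernel of $\pd_x$ on $A$ consists of the constants, by the exact sequence recalled above, it follows that $T_n = -(2n+1)R_{n+1} + 2uR_n + \frac{\lambda^2}{4}\pd_x^2 R_n + c_n$, where $c_n$ involves no $u_k$.

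Now I would apply $\delta$ to this identity. The constant $c_n$ is annihilated by $\delta$, so its precise value is irrelevant and there is no need to fix the normalization of $T_n$. Using $\delta R_{n+1} = R_n$, the vanishing $\delta\pd_x^2 R_n = \delta\pd_x(\pd_x R_n) = 0$ coming from $\delta\pd_x = 0$, and the Lemma $\delta(uR_n) = (n+1)R_n - \half\lambda\frac{\pd}{\pd\lambda}R_n$, I get
\be
\delta T_n = -(2n+1)R_n + 2(n+1)R_n - \lambda\frac{\pd}{\pd\lambda}R_n = \left(1 - \lambda\frac{\pd}{\pd\lambda}\right)R_n,
\ee
which is exactly the claim.

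The only genuine obstacle is the total-derivative rearrangement in the second step: one must recognize that combining the Lenard recursion with a single integration by parts turns $\pd_x u \cdot R_n$ into the $x$-derivative of the explicit combination $-(2n+1)R_{n+1} + 2uR_n + \frac{\lambda^2}{4}\pd_x^2 R_n$. Everything afterwards is a direct substitution into the three known identities, and the pleasant feature is that, because $\delta$ kills constants, the ambiguity in the antiderivative defining $T_n$ never enters the computation.
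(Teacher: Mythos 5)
Your proof is correct and follows essentially the same route as the paper: the closed form $T_n = 2uR_n + \frac{\lambda^2}{4}\pd_x^2 R_n - (2n+1)R_{n+1}$ that you obtain by integrating the Lenard recursion by parts is exactly the paper's rewriting of \eqref{eqn:Lenard-3} via the definition \eqref{Def:T-n}, and the subsequent application of $\delta$ using $\delta R_{n+1}=R_n$, $\delta\pd_x=0$, and the Lemma is identical. Your only additions are making the derivation of that closed form explicit and noting that the integration constant $c_n$ is killed by $\delta$, a point the paper leaves implicit.
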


\begin{proof}
Rewrite \eqref{eqn:Lenard-3} as follows:
\be 
T_{n} =  2u \cdot  R_n + \frac{\lambda^2}{4} \pd_x^2  R_n
- (2n+1)  R_{n+1} ,
\ee
Apply $\delta$ on both sides:
\ben
\delta T_n 
& = & 2 \delta (u \cdot R_n) + \frac{\lambda^2}{4} \delta \pd_x^2R_n - (2n+1) \cdot \delta R_{n+1}  \\
& = & 2(n+1 - \half \lambda \frac{\pd}{\pd \lambda}) R_n - (2n+1) R_n \\ 
& = & (1 -\lambda \frac{\pd}{\pd \lambda}) R_n.
\een

\end{proof}

\section{Mean Field Theory of Topological 2D Gravity}

\subsection{The KdV equations}

The KdV hierarchy is the following sequence of partial differential equations:
\be \label{eqn:KdV}
\pd_{t_n} u = \pd_{t_0} R_{n+1},
\ee
where $t_0 = x$.

\subsection{The string equation}

By the {\em  puncture equation} we mean the following equation:
\be
\frac{\pd F}{\pd  t_{0}}
= \sum_{k=1}^\infty t_k\frac{\pd F}{\pd t_{k-1}} + \frac{t_0^2}{2\lambda^2},  \label{eqn:Puncture} \\
\ee
In the mathematical literature this is referred to as the {\em string equation}.
Following physicists,
we will reserve this name for the equation \eqref{eqn:String} below.
Take $\lambda^2\pd_{u_0}^2$ on both sides of \eqref{eqn:Puncture}:
\be
 \frac{\pd u}{\pd  t_0}
= \sum_{k=1}^\infty t_k\frac{\pd u}{\pd t_{k-1}} + 1.
\ee
And so by \eqref{eqn:KdV},
\be
\pd_x u= \sum_{k=1}^\infty t_k \pd_x R_k + 1.
\ee
It can be shown that when $u= \pd_x^2F$ satisfies the KdV hierarchy
and $F$ satisfies the  puncture equation,
the following equation holds \cite{DVV, Getzler, Liu}:
\be \label{eqn:String}
u = \sum_{k=1}^\infty t_k  R_k + x.
\ee
This is called the {\em string equation} in the physics literature.

\subsection{Landau-Ginzburg equation}

Expanding both sides of the string equation as series in $\lambda$,
one gets by comparing the leading terms the following equation:
\be \label{eqn:Landau-Ginzburg}
u_{0} = \sum_{k=1}^\infty \frac{1}{k!} t_k \cdot u_{0}^k + x.
\ee
We will call this equation  the {\em Landau-Ginzburg equation}.

\subsection{Derivation of the mean field theory}

Multiply both sides of \eqref{eqn:String} by $u_x$ and integrate with respect to $x$:
\be
\int u u_x dx = \sum_{k=1}^\infty t_k  \int u_x R_k dx + \int x u_x dx.
\ee
The last term on the right-hand side can be found by integration by parts:
\ben
\int x u_x dx  = \int x du = x u - \int u dx = xu - \lambda^2 \frac{\pd F}{\pd x}
\een
So we get
\be
\half u^2  = \sum_{k=1}^\infty t_k T_k + x u - \lambda^2 \frac{\pd F}{\pd x}.
\ee
Rewrite it as follows:
\ben
\frac{\pd}{\pd t_0} F & =  &  \frac{1}{\lambda^2} \sum_{n \geq 0} (t_n - \delta_{n1}) \cdot T_n.
\een
Integrate once more,
we obtain the following

\begin{prop}
Suppose that $u=u(\bt)$ is determined by \eqref{eqn:String},
then one has:
\be
F(\bt) =  F(\bt)|_{t_0=0} + \frac{1}{\lambda^2}
\sum_{n \geq 0} (t_n - \delta_{n1}) \cdot \int  T_{n}[u(\bt)] dx,
\ee
\end{prop}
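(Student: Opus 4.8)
The plan is to integrate, with respect to the space variable $x=t_0$, the first-order relation
\[
\frac{\pd F}{\pd t_0}=\frac{1}{\lambda^2}\sum_{n\geq 0}(t_n-\delta_{n1})\,T_n[u],
\]
which was just obtained by multiplying the string equation \eqref{eqn:String} by $u_x$ and integrating once in $x$. Since the right-hand side is an explicit expression in the field $u$ and its $x$-derivatives, the fundamental theorem of calculus gives
\[
F(\bt)-F(\bt)|_{t_0=0}=\frac{1}{\lambda^2}\int_0^{t_0}\sum_{n\geq 0}(t_n-\delta_{n1})\,T_n[u]\,dx ,
\]
and the proposition follows once the terms are rearranged and the integration constant is identified.

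The routine steps are then as follows. First I would note that $t_1,t_2,\dots$ do not depend on $t_0=x$, so for each $n\geq 1$ the factor $t_n-\delta_{n1}$ is constant along the integration and may be taken outside $\int\,dx$, yielding the summands $(t_n-\delta_{n1})\int T_n[u]\,dx$. Second, the constant of integration is a priori a function of $t_1,t_2,\dots$ alone; evaluating both sides at $t_0=0$ forces it to equal $F(\bt)|_{t_0=0}$. Reassembling the terms into the single sum $\sum_{n\geq 0}$ produces the stated formula.

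The one genuinely delicate point, and where I would concentrate, is the $n=0$ term, in which the coupling $t_0$ coincides with the integration variable $x$. Here $(t_0-\delta_{01})T_0=x\,u$ cannot be pulled out of $\int\,dx$ as $t_0$ times an antiderivative; the symbol $t_0\int T_0[u]\,dx$ in the statement must be read as $\int x\,u\,dx$, with the factor $t_0=x$ kept inside the integral. To make this unambiguous I would check the identity in the reverse direction: applying $\pd_{t_0}$ to the proposed right-hand side and using $\frac{d}{dt_0}\int_0^{t_0}g(x)\,dx=g(t_0)$ recovers $\frac{1}{\lambda^2}\sum_{n\geq 0}(t_n-\delta_{n1})T_n[u]$ evaluated at the upper limit, i.e.\ exactly the relation above; in particular it does not produce the spurious term $\int T_0[u]\,dx$ that would arise if $t_0$ were mistakenly treated as a constant prefactor. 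Finally, since the paper works formally with unspecified limits of integration, I would stress that the whole computation is an identity of formal antiderivatives, with the boundary contribution at the lower limit absorbed into $F(\bt)|_{t_0=0}$, and that no question of convergence enters at this stage.
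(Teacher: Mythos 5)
Your proposal is correct and takes essentially the same route as the paper: the paper obtains $\pd_{t_0}F=\frac{1}{\lambda^2}\sum_{n\geq 0}(t_n-\delta_{n1})T_n$ by multiplying \eqref{eqn:String} by $u_x$ and integrating once in $x$ (using $\pd_x T_n=u_x R_n$, $\int x\,u_x\,dx=xu-\lambda^2\pd_x F$, $T_1=\tfrac{1}{2}u^2$ and $t_0T_0=xu$), and then, exactly as you do, integrates once more in $t_0$ and fixes the constant at $t_0=0$. Your observation that the $n=0$ term must be read with $t_0=x$ kept inside the integral is a sound clarification of the paper's formal notation, which the paper itself glosses over.
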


\begin{thm}
Define the following action for a formal field $u=u(x)$ on $\bR^1$:
\be
S[u] =  F(\bt)|_{t_0=0} + \frac{1}{\lambda^2}
\sum_{n \geq 0} (t_n - \delta_{n1}) \cdot \int  T_{n}[u(x)] dx.
\ee
Then the Euler-Lagrange equation for this action is equivalent to the string equation 
\eqref{eqn:String}.
\end{thm}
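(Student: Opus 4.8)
The plan is to compute the first variation of $S[u]$ and then convert the resulting Euler--Lagrange equation into the string equation using the Proposition proved above. First I would replace $u$ by $u+\epsilon v$ and differentiate at $\epsilon=0$: the constant term $F(\bt)|_{t_0=0}$ drops out, and after integrating by parts in $x$ and discarding boundary terms (legitimate in the present formal setting) one obtains
\be
\left.\frac{d}{d\epsilon}S[u+\epsilon v]\right|_{\epsilon=0}
= \frac{1}{\lambda^2}\int \Bigl(\sum_{n\geq 0}(t_n-\delta_{n1})\,\delta T_n[u]\Bigr)\, v\, dx .
\ee
Since $v$ is an arbitrary variation, the Euler--Lagrange equation is $\sum_{n\geq 0}(t_n-\delta_{n1})\,\delta T_n = 0$, the overall factor $\lambda^{-2}$ being nonzero.

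The one point that needs care is the $n=0$ term, whose coefficient is $t_0-\delta_{01}=t_0=x$, the integration variable itself rather than a genuine constant. Here I would observe that $T_0=u$ contains no $x$-derivatives of $u$, so the explicit factor $x$ never meets a $\pd_x$ under integration by parts; this term therefore contributes exactly $x\cdot \delta T_0 = x$, with no extra product-rule or boundary corrections. This is consistent with the Proposition, which gives $\delta T_0=(1-\lambda\frac{\pd}{\pd\lambda})R_0=1$. For $n\geq 1$ the couplings $t_n$ are honest $\lambda$-independent constants and pass straight through the integration by parts.

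Next I would invoke the Proposition, $\delta T_n=(1-\lambda\frac{\pd}{\pd\lambda})R_n$, so that the Euler--Lagrange equation reads $\sum_{n\geq 0}(t_n-\delta_{n1})(1-\lambda\frac{\pd}{\pd\lambda})R_n=0$. Because the operator $D:=1-\lambda\frac{\pd}{\pd\lambda}$ is linear in $\lambda$ and the coefficients $t_n-\delta_{n1}$ (including $t_0=x$) do not depend on $\lambda$, it factors outside the sum, giving $D\,G=0$, where
\be
G := \sum_{n\geq 0}(t_n-\delta_{n1})R_n = \sum_{k\geq 1}t_k R_k + x - u ,
\ee
using $R_0=1$, $t_0=x$, and $R_1=u$. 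Thus $G=0$ is exactly the string equation \eqref{eqn:String}, and the problem reduces to proving $D\,G=0 \iff G=0$.

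Finally, this last equivalence follows from a grading argument in $\lambda$. Expanding $G=\sum_{g\geq 0}\lambda^{2g}G^{(g)}$ (only even powers appear, since each $R_n=\sum_g \lambda^{2g}R_n^{(g)}$ does and the coefficients are $\lambda$-free), one computes $D\,G=\sum_{g\geq 0}(1-2g)\lambda^{2g}G^{(g)}$. As $1-2g\neq 0$ for every integer $g\geq 0$, the vanishing of $D\,G$ forces $G^{(g)}=0$ for all $g$, hence $G=0$; the converse is immediate. This establishes that the Euler--Lagrange equation is equivalent to the string equation. I expect the only genuinely delicate step to be the handling of the $x$-dependent $n=0$ coefficient; the remainder is formal variational manipulation together with the $\lambda$-grading argument.
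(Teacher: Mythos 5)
Your proposal is correct and follows essentially the same route as the paper: compute the variational derivative of the action, apply the Proposition $\delta T_n = (1-\lambda\frac{\pd}{\pd\lambda})R_n$, and use the $\lambda$-grading argument that $1-2g\neq 0$ for all $g\geq 0$ to pass from $(1-\lambda\frac{\pd}{\pd\lambda})G=0$ to $G=0$, i.e., the string equation. Your explicit check that the $x$-dependent coefficient of the $n=0$ term causes no trouble (since $T_0=u$ contains no derivatives) is a careful elaboration of a point the paper leaves implicit, but the argument is the same.
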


\begin{proof}
This is because
\ben
\frac{\delta}{\delta u} S[u] 
& = & \frac{1}{\lambda^2}
\sum_{n \geq 0} (t_n - \delta_{n1}) \cdot \delta T_{n}[u(x)] \\
& = & \frac{1}{\lambda^2}
\sum_{n \geq 0} (t_n - \delta_{n1}) \cdot (1-\lambda \frac{\pd}{\pd \lambda}) R_{n}[u(x)] \\
& = &  
\sum_{n \geq 0} (t_n - \delta_{n1}) \cdot 
\sum_{g \geq 0} \lambda^{2g-2} (1-2g) R_{n}^{(g)}[u(x)].
\een
Therefore, 
if $\frac{\delta}{\delta u} S[u]  = 0$,
then one has for all $g\geq 0$,
\ben
(1-2g) 
\cdot \sum_{g \geq 0} \sum_{n \geq 0} (t_n - \delta_{n1}) \cdot  R_{n}^{(g)}[u(x)] = 0,
\een
and so 
\ben
\sum_{g \geq 0} \sum_{n \geq 0} (t_n - \delta_{n1}) \cdot  R_{n}^{(g)}[u(x)] = 0.
\een
It follows that \eqref{eqn:String} holds.
Conversely,
if $u$ satisfies \eqref{eqn:String},
then it also satisfies the Euler-Langrage equation.
\end{proof}

By the explicit expression for the first few $T_n$'s given earlier,
we see that the first few terms of the Lagrangian for our action functional is:
\ben
L &= & t_0 u + \frac{t_1-1}{2} u^2 + t_2 (\frac{1}{6} u^3+ \frac{1}{24} u_x^2\lambda^2) \\
& + & t_3 (\frac{1}{24} u^4 + \frac{1}{24} u u_x^2 \lambda^2
- \frac{1}{480} u_{2x}^2\lambda^4+ \frac{1}{240} u_x u_{3x}\lambda^4) \\
& + & t_4 ( \frac{1}{120} u^5+ \frac{1}{48} u^2 u_x^2\lambda^2
+ \frac{1}{240} u_x^2u_{2x} \lambda^4  - \frac{1}{480} u u_{2x}^2 \lambda^4 \\
&+ & \frac{1}{240} u u_x u_{3x} \lambda^4
+ \frac{1}{13440} u_{3x}^2\lambda^6
- \frac{1}{6720} u_{2x} u_{4x} \lambda^6
+ \frac{1}{6720} u_x u_{5x} \lambda^6) + \cdots.
\een
The genus zero part of the Lagrangian is:
\be
L_0 = \sum_{n \geq 0} (t_n -\delta_{n1}) \frac{u^{n+1}}{(n+1)!}.
\ee
This is what we used in \cite{Zhou2} for the mean field theory of the topological 1D gravity.
Taking all $t_n = 0$,
one gets a plane algebraic curve:
\be
L_0 = - \half u^2.
\ee
This is the Airy curve \cite{Bennett-Cochran-Safnuk-Woskoff, Eynard, Zhou1}
that determines the topological 2D gravity by Eynard-Orantin 
topological recursion \cite{EO}.

\section{Convergence Problem}

Now we restrict to the $(t_0, t_2)$-plane.
The string equation \eqref{eqn:String} becomes:
\be
u(t_0, t_1) = t_0 + t_2 \biggl(\frac{1}{2} u^2(t_0, t_1) + \frac{\lambda^2}{12} \pd_{t_0}^2u(t_0, t_1) \biggr).
\ee
We solve the above equation recursively by rewriting it as follows:
\be \label{eqn:u0}
u_0(t_0, t_1) = t_0 +  \frac{t_2}{2} u_0^2(t_0, t_1),
\ee
and for $g \geq 1$,
\be \label{eqn:ug}
u_g(t_0, t_2)
= \frac{t_2}{2}  \sum_{g_1+g_2=g} u_{g_1}(t_0, t_2) \cdot u_{g_2}(t_0, t_2)
+ \frac{t_2}{12} \frac{\pd^2}{\pd t_0^2} u_{g-1}(t_0, t_2).
\ee
One can rewrite \eqref{eqn:ug} in the following form:
\ben
u_g(t_0, t_2) = \frac{t_2}{1- t_2u_0(t_0, t_2)} \biggl(\half \sum_{g_1=1}^{g-1} u_{g_1}(t_0, t_2) \cdot u_{g-g_1}(t_0, t_2)
+ \frac{1}{12} \frac{\pd^2}{\pd t_0^2} u_{g-1}(t_0, t_2) \biggr).
\een
From \eqref{eqn:u0}
one can get \cite{Zhou}:
\be
u_0(t_0, t_2)  =  \frac{1- ( 1 - 2t_0t_2 )^{1/2} }{t_2}.
\ee
After using \eqref{eqn:ug} to get
\bea
u_1(t_0, t_2)  =  \frac{1}{12} t_2^2 ( 1 - 2t_0 t_2 )^{-2}, \\
u_2(t_0, t_2)  = \frac{49}{288} t_2^5 ( 1 - 2t_0t_2 )^{-9/2},
\eea
we make the following ansatz:
\be
u_g = c_g t_2^{3g-1} (1-2t_0t_2)^{-(5g-1)/2} + \delta_{g,0} t_2^{-1}
\ee
and get the following recursion relations for the coefficients $c_g$:
\be
c_g = \frac{1}{2} \sum_{g_1=1}^{g-1} c_{g_1} c_{g-g_1}
+ \frac{1}{12} (5g-4)(5g-6) c_{g-1}.
\ee
Set $c_g = \frac{2}{24^g} a_g$,
then one has
\be
a_0 = - \frac{1}{2},
\ee
and for $n > 0$ the recursion relation:
\be
a_n =  \sum_{k=1}^{n-1} a_k a_{n-k} + 2(5n-4)(5n-6) a_{n-1}.
\ee
The sequence $a_n$ is the sequence A094199 on Sloane's The On-Line Encyclopedia of Integer Sequences.
This sequence appeared in \cite{Janson, Janson-Chassaing},
see also \cite{Finch}.
By \cite[Theorem 4.2]{Janson-Chassaing},
as $n \to \infty$,
\be
a_n \sim \beta \cdot 50^{n-1} (n-1)!
\ee
for some constant $\beta$.
The constant $\beta$ has been determined by Kotesovec to be
\be
\beta = \frac{5\sqrt{15}}{2\pi^2}.
\ee
By Stirling's formula,
\be
a_n \sim \sqrt{3}  2^{n-1}  5^{2n-1/2} n^{2n-1} / (\pi \exp(2n)).
\ee
It follows that
\be
c_n \sim \sqrt{3}    5^{2n-1/2} n^{2n-1} / (\pi 12^{n}\exp(2n)).
\ee
Therefore, as $n \to \infty$,
\be
c_n^{1/n} \sim \frac{25}{12 e^2} n^2.
\ee
Therefore,
when $t_2 \neq 0$,
\be
u(t_0, t_2) = \sum_{g\geq 0} \lambda^{2g} 
c_g t_2^{3g-1} (1-2t_0t_2)^{-(5g-1)/2} + t_2^{-1}
\ee
has radius of convergence equal to zero.

\section{Discussions}

We expect to extend our construction of the mean field theory to other topological field theories. 
The following discussions are based on speculative assumptions.
We will check these assumptions in subsequent work.
The free energy of such a theory in two dimensions depends on infinitely many parameters 
$\{t^{a,n}\}_{0 \leq a \leq m, n \geq 0}$ for some fixed $m$,
where $m+1$ is the number of primary operators.
Suppose that
\be
\frac{\pd^3F_0(\bt)}{\pd t^{0,0} \pd t^{a, 0} \pd t^{b,0}} = \eta_{ab},
\ee 
where $(\eta_{ab})$ is a nondegenerate symmetric matrix. 
Denote its inverse matrix by $(\eta^{ab})$. 
For the order parameters,
as in \cite{Dijkgraaf-Witten} we take 
\be
u_a : = \lambda^2 \frac{\pd^2F}{\pd t^{0,0} \pd t^{a,0}}.
\ee
Suppose that they satisfy the integrable hierarchy 
\be
\frac{\pd u_a}{\pd t^{b, n}} = \pd_x R_{a,b; n}[u],
\ee
where $R_{a,b; n}[u]$ are some differential polynomials.
Furthermore, assume that the free energy $F$ satisfies the puncture equation of the form:
\be
\frac{\pd F}{\pd t^{0,0}}
= \sum_{a=0}^m \sum_{n=1}^\infty t^{a, n} \frac{\pd F}{\pd t^{a, n-1}} 
+ \frac{1}{2\lambda^2} \eta_{ab} t^{a,0}t^{b,0},
\ee
such that one can derive from it the string equations:
\be \label{eqn:String-General}
u_b = \sum_{n \geq 1} \sum_{c=0}^m t^{c,n} R_{b,c;n}[u] + \eta_{b0} x.
\ee
Multiply both sides by $\eta^{ab} \pd_x u_a$ and sum over repeated indices:
\be
\eta^{ab} \pd_x u_a \cdot u_b = \sum_{n \geq 1} \sum_{c=0}^m \eta^{ab} t^{c,n} \pd_x u_a \cdot R_{b,c; n}[u] 
+ x \pd_x u_0.
\ee
Suppose that after integration one has
\be
\half \eta^{ab} u_a u_b = \sum_{n \geq 1} \sum_{c=0}^m \eta^{ab} t^{c,n} T_{a, b,c; n}[u]
+ x u_0 - \lambda^2 \frac{\pd F}{\pd x}.
\ee
Then one gets:
\ben
F = \lambda^{-2} \int \biggl(\sum_{n \geq 1} \sum_{c=0}^m \eta^{ab} t^{c,n} T_{a, b, c; n}[u]
+ x u_0 - \half \eta^{ab} u_a u_b \biggr) dx + F|_{t^{0,0} = 0},
\een
where $u$ satisfies the string equations \eqref{eqn:String-General}.
Next we define the action functional to be
\ben
S = \lambda^{-2} \int \biggl(\sum_{n \geq 1} \sum_{c=0}^m \eta^{ab} t^{c,n} T_{a, b, c; n}[u]
+ x u_0 - \half \eta^{ab} u_a u_b \biggr) dx + F|_{t^{0,0} = 0},
\een
and our final assumption is that the system of Euler-Lagrange equations
\be
\frac{\delta S}{\delta u_a} = 0, \;\;\; a =0, \dots, m, 
\ee
is equivalent to the system of string equations \eqref{eqn:String-General}.
When all our assumptions are met,
we then arrive at a one-dimensional mean field theory for the original theory.
We conjecture this is the case for the theories arising from semisimple Frobenius manifolds
\cite{Dubrovin-Zhang}.

Another direction for possible generalizations is to find $(n-1)$-dimensional mean field theory for
topological $n$-dimensional gravity.
In \cite{Zhou2} we have studied topological 1D gravity by a 0-dimensional mean field theory.
We have seen in Section 3 that the genus zero part of 
the Lagrangian density for the topological 2D gravity 
is the  Lagrangian density for the topological 1D gravity.
Furthermore, 
in both cases the system of equations of motion has only one formal solution. 
It will be very interesting to generalize these to higher dimensions.

\vspace{.2in}
{\em Acknowledgements}.
This research is partially supported by NSFC grant 11171174.
The author thanks Siqi Liu for sharing with him his notes on DVV relations.
He also thanks Si Li for discussions on relations between topological 1D and 2D gravity
for a related problem.

\end{document}